\def\Name{Chen Song}  
\def\Login{University of Illinois at Chicago}
\def\Session{}
\newtheorem{lemma}{Lemma}[section]
\newtheorem{theorem}[lemma]{Theorem}
\newtheorem{conjecture}[lemma]{Conjecture}
\newtheorem{example}[lemma]{Example}
\newtheorem{remark}[lemma]{Remark}
\title{The Syzygy Matrix and the Differential for Rational Curves in Projective Space}
\author{\Name, \texttt{\Login}}
\date{}
\def\endproofmark{$\Box$}
\newenvironment{proof}{\par{\bf Proof}:}{\endproofmark\smallskip}
\begin{document}
\maketitle

\begin{abstract}
    In this paper, we study whether a given morphism $f$ from the tangent bundle of $\mathbb{P}^1$ to a balanced vector bundle of degree $(n+1)d$  is induced by the restriction of the tangent bundle $T_{\mathbb{P}^n}$ to a rational curve of degree $d$ in $\mathbb{P}^n$. We propose a conjecture on this problem based on Mathematica computations of some examples and provide computer-assisted proof of the conjecture for certain values of $n$ and $d$. 
\end{abstract}

\section{Introduction}
Rational curves are crucial in the study of birational geometry and the arithmetic of varieties.
Analyzing the restriction of vector bundles to a rational curve unveils important geometric properties
of both the vector bundles and the underlying base space. Let $C \xrightarrow{\phi} \mathbb{P}^n$ be a rational curve in projective space over a field $K$. By Grothendieck's theorem on vector bundles on $\mathbb{P}^1$, the restriction of the tangent bundle of $\mathbb{P}^n$ to C splits as a direct sum of line bundles $T_{\mathbb{P}^n}|_C= \bigoplus_{i=1}^{n} \mathcal{O}_{\mathbb{P}^1}(a_i)$.
We say that the restriction of tangent bundle is \emph{balanced} if $|a_i - a_j| \leq 1$ for every 
$i,j$. \par

The splitting type of the restriction of the tangent bundle and normal bundle of a rational curve is an extensively studied topic. In \cite{Ram90}, Ramella proved that the restriction of the tangent bundle of $\mathbb{P}^n$ to a general rational curve is balanced. In \cite{man}, Mandal studied the locus of morphisms from a rational curve to a Grassmannian with a specified splitting type of the restricted tangent bundle. In \cite{Sac}, Sacchiero proves that a general non-degenerate rational curve of degree $d$ in $\mathbb{P}^n$ has a balanced normal bundle when $n\geq 3$ and $char(K)=0$. In \cite{Lar}, Larson and Vogt extend this result to the case $char(K)=2$ .  In \cite{Coskun2024}, Coskun, Larson and Vogt proved that $N_{C/Gr(a,a+b)}$ is 2-balanced for a general rational curve in Grassmannian.  \par
Let $C$ be a general rational curve in $\mathbb{P}^n$ of degree $d\geq n+1$. Then $T_{\mathbb{P}^n}|_C$ has degree $(n+1)d$. By Ramella's result, we have a balanced restriction $$T_{\mathbb{P}^n}|_C= \mathcal{O}_{\mathbb{P}^1}(d+q)^{\oplus a} \bigoplus  \mathcal{O}_{\mathbb{P}^1}(d+q+1)^{\oplus b},$$ where $q=\lfloor \frac{d}{n} \rfloor$, $a=(q+1)n-d$ and $b=d-nq$. If we compose the tangent map $T_C \xrightarrow{d\phi} T_{\mathbb{P}^n}$ with the restriction map  $T_{\mathbb{P}^n} \rightarrow T_{\mathbb{P}^n}|_C$, we get a morphism $$f:\mathcal{O}_{\mathbb{P}^1}(2) \rightarrow \mathcal{O}_{\mathbb{P}^1}(d+q)^{\oplus a} \bigoplus  \mathcal{O}_{\mathbb{P}^1}(d+q+1)^{\oplus b}.$$ \par
Conversely, Eric Larson raised the following question. Given a morphism $$f:\mathcal{O}_{\mathbb{P}^1}(2) \rightarrow \mathcal{O}_{\mathbb{P}^1}(d+q)^{\oplus a} \bigoplus  \mathcal{O}_{\mathbb{P}^1}(d+q+1)^{\oplus b} $$ where $q$, $a$ and $b$ are defined above, when can we find a rational curve $C\subset \mathbb{P}^n$ such that $f$ is given by the composition of tangent map with restriction map?  \par 

 When $d<n$, the curve $C$ is degenerate and not all morphisms $f$ can be represented by $C$. The restriction is not balanced except when $d=1$. \par 
 When $d=n$, $C$ is a rational normal curve. $Hom(\mathcal{O}_{\mathbb{P}^1}(2) ,\mathcal{O}_{\mathbb{P}^1}(n+1)^{n}) \cong K^{n^2}$. Therefore, $f$ is induced by the restriction of the tangent map.
 \par 
When $d \geq n+1$, the question is still open.  Now we state our main conjecture and results on this problem. 

\begin{conjecture} \label{conj}
    Let $d\geq n+1$ be integers. Let $q=\lfloor \frac{d}{n} \rfloor$, $a=(q+1)n-d$, $b=d-nq$. \par
    (1) If $char(K)=0$, $n=2,3$ or $d \geq n+2$, then given a general morphism $$f:\mathcal{O}_{\mathbb{P}^1}(2) \rightarrow \mathcal{O}_{\mathbb{P}^1}(d+q)^{\oplus a} \bigoplus  \mathcal{O}_{\mathbb{P}^1}(d+q+1)^{\oplus b},$$ there is a rational curve $C \xrightarrow{\phi} \mathbb{P}^n$ of degree $d$ such that $f=\phi^* \circ d\phi $. \par
    (2) If $n \geq 4$ is an integer, then there exists a morphism 
$$f : \mathcal{O}_{\mathbb{P}^1}(2) \to \mathcal{O}_{\mathbb{P}^1}(n+1)^{\oplus (n-1)} \oplus \mathcal{O}_{\mathbb{P}^1}(n+3)$$
such that there is no rational curve 
$C \xrightarrow{\phi} \mathbb{P}^n$
of degree $n+1$ for which $f = d \phi \circ \phi^*$.

\end{conjecture}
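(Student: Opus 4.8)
My plan is to package the realizability question as a statement about one incidence map and then reduce it to a rank computation. Fix $E=\mathcal{O}_{\mathbb{P}^1}(d+q)^{\oplus a}\oplus\mathcal{O}_{\mathbb{P}^1}(d+q+1)^{\oplus b}$. Over the dense open locus of $\mathrm{Mor}_d(\mathbb{P}^1,\mathbb{P}^n)$ on which $T_{\mathbb{P}^n}|_C$ is balanced (nonempty by Ramella), a choice of isomorphism $\psi\colon\phi^{*}T_{\mathbb{P}^n}\xrightarrow{\sim}E$ forms an $\mathrm{Aut}(E)$-torsor $\mathcal{P}$, and I would study
\begin{equation*}
\Phi\colon\mathcal{P}\longrightarrow\mathrm{Hom}\!\left(\mathcal{O}_{\mathbb{P}^1}(2),E\right),\qquad(\phi,\psi)\longmapsto\psi\circ d\phi .
\end{equation*}
A morphism $f$ is realizable exactly when it lies in the image of $\Phi$. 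Using $\dim\mathrm{Mor}_d=(n+1)d+n$ and $\dim\mathrm{Aut}(E)=(a+b)^2=n^2$ against $\dim\mathrm{Hom}(\mathcal{O}(2),E)=(n+1)d-n$, the source exceeds the target by precisely $n^2+2n=\dim\mathrm{PGL}_{n+1}$. Since replacing $\phi$ by $\sigma\circ\phi$ for $\sigma\in\mathrm{PGL}_{n+1}$ and $\psi$ by $\psi\circ(\phi^{*}d\sigma)^{-1}$ leaves $\psi\circ d\phi$ unchanged, $\mathrm{PGL}_{n+1}$ acts along the fibres of $\Phi$ and, at a general point, accounts for the whole excess. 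Hence part (1) is equivalent to the dominance of $\Phi$, and in characteristic $0$ this follows from surjectivity of the differential $d\Phi$ at a single well-chosen point.

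\textbf{The differential and the syzygy matrix.} After trivializing $\psi=\mathrm{id}$, the differential decomposes as
\begin{equation*}
d\Phi(v,\eta)=\eta\circ f+\mu(v),\qquad\eta\in\mathrm{End}(E),\ v\in H^{0}\!\left(\phi^{*}T_{\mathbb{P}^n}\right),
\end{equation*}
where $\eta\circ f$ spans the tangent space to the $\mathrm{Aut}(E)$-orbit of $f=\psi\circ d\phi$, and $\mu\colon H^{0}(E)\to\mathrm{Hom}(\mathcal{O}(2),E)=H^{0}(E(-2))$ is the variation of the differential. Writing $\phi=[F_0:\dots:F_n]$ and a first-order deformation as $(G_0,\dots,G_n)\bmod(F_0,\dots,F_n)$ through the Euler sequence $0\to\mathcal{O}\to\mathcal{O}(d)^{\oplus(n+1)}\to E\to0$, the operator $\mu$ is induced by coordinatewise differentiation $(G_i)\mapsto(G_i')$, corrected by the first-order jet of $\phi$. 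Surjectivity of $d\Phi$ thus becomes the statement that an explicit matrix assembled from the $F_i$, their derivatives, and the relations among them attains maximal rank; this is the \emph{syzygy matrix} of the title. I would evaluate it at a curve carrying a torus action, whose syzygy bundle splits transparently into line bundles, so that the matrix becomes block structured and its rank can be read off.

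\textbf{Completing part (1).} Granting the full-rank condition, generic smoothness finishes the argument: surjectivity of $d\Phi$ at one point makes $\Phi$ submersive there, hence dominant, so a general $f$ is realizable — this is where $\mathrm{char}\,0$ enters. The hypotheses should reappear as the exact range in which the syzygy matrix is of maximal rank: the only pair $(n,d)$ with $d\ge n+1$ left uncovered by ``$n=2,3$ or $d\ge n+2$'' is $n\ge4,\ d=n+1$, which is the regime of part (2). I expect a uniform treatment of $d\ge n+2$, where the additional freedom in the deformations $G_i$ drives $\mu$ onto the summands of $E(-2)$ that $\eta\circ f$ cannot reach, while the exceptional families $n=2,3$ are settled by the explicit Mathematica rank computations promised in the abstract. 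The main obstacle is precisely this rank analysis at the boundary $d=n+1$: proving maximal rank for $n=2,3$, and for the converse establishing a \emph{persistent} rank drop for $n\ge4$.

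\textbf{Part (2): producing a non-realizable $f$.} For $n\ge4$ and $d=n+1$ (so $q=1$, $a=n-1$, $b=1$), the strategy is to show the syzygy matrix drops rank not merely at special curves but uniformly across $\mathcal{P}$, so that $\Phi$ is not dominant and its image lies in a proper closed subvariety $Z\subsetneq\mathrm{Hom}(\mathcal{O}(2),E)$ cut out by the vanishing of the relevant maximal minors. I would then exhibit an explicit
\begin{equation*}
f\colon\mathcal{O}_{\mathbb{P}^1}(2)\to\mathcal{O}_{\mathbb{P}^1}(n+1)^{\oplus(n-1)}\oplus\mathcal{O}_{\mathbb{P}^1}(n+3)
\end{equation*}
violating the defining equations of $Z$ — for instance one whose components share prescribed vanishing, forcing any hypothetical antiderivative $[F_0:\dots:F_n]$ to drop degree or degenerate — and verify directly that no degree-$(n+1)$ curve reproduces it. The delicate point, and the principal obstacle for part (2), is to show the rank deficiency is intrinsic rather than an artifact of a bad choice of curve; I would establish this by computing the image of $\mu$ through the cohomology of the twisted syzygy bundle and showing that, for $n\ge4$, its cokernel cannot be absorbed by the $\mathrm{End}(E)$-orbit direction.
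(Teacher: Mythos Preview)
The statement you are addressing is a \emph{conjecture}; the paper does not prove it. What the paper actually establishes are Theorems~1.2 and~1.3: part~(1) for a finite list of $(n,d)$ and part~(2) for $4\le n\le 8$, both by Mathematica computation. So there is no ``paper's own proof'' of the full statement to compare against, and your proposal should be read as a strategy toward the conjecture rather than a proof.

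That said, your reduction is essentially the same as the paper's. Both translate realizability into dominance of a map to $\mathrm{Hom}(\mathcal{O}(2),E)$ and then into a full-rank condition on its Jacobian at one point. The substantive difference is in the parametrization of the source. You work on the torsor $\mathcal{P}$ of pairs $(\phi,\psi)$; the paper instead parametrizes directly by the \emph{syzygy matrix} $LP$ and recovers the curve as the tuple of $n\times n$ minors of $LP$. This is the key computational move: it bypasses the need to solve for the splitting $\psi$ (which would require inverting polynomial relations in the $g_{ij}$) and makes $\Psi$ an explicit polynomial map in the entries $l_{kij},p_{kij}$, whose Jacobian can be written down and evaluated symbolically. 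Your $(\phi,\psi)$-parametrization is conceptually cleaner and makes the $\mathrm{PGL}_{n+1}$ fibre direction visible, but it is harder to implement because the trivialization $\psi$ is not a polynomial function of $\phi$.

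The genuine gap in your proposal is the one you yourself flag: the rank analysis is never carried out. You ``expect a uniform treatment of $d\ge n+2$'' and plan to evaluate at a torus-equivariant curve, but no such computation is performed; likewise for part~(2) you assert a persistent rank drop for $n\ge4$ without exhibiting it. The paper does not fill this gap either --- it abandons any uniform argument and simply checks the Jacobian rank numerically at random points for each listed $(n,d)$, and for part~(2) observes (again by machine) that a certain submatrix built from the $F_i$ has rank~$1$ for $n=4,\dots,8$. So your outline is a correct and slightly more conceptual restatement of the paper's framework, but it remains a strategy: neither you nor the paper supplies the general rank argument that would prove the conjecture.
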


As the main result of this paper, we provide computer assisted proofs of this conjecture for some numbers of $n$ and $d$. 

\begin{theorem}\label{thm1}
  Let $d\geq n+1$ be integers. $char(K)=0$.  $q=\lfloor \frac{d}{n} \rfloor$, $a=(q+1)n-d$, $b=d-nq$. 
Given a general morphism $$f:\mathcal{O}_{\mathbb{P}^1}(2) \rightarrow \mathcal{O}_{\mathbb{P}^1}(d+q)^{\oplus a} \bigoplus  \mathcal{O}_{\mathbb{P}^1}(d+q+1)^{\oplus b}$$, there is a rational curve $C \xrightarrow{\phi} \mathbb{P}^n$ of degree $d$ such that $f=\phi^* \circ d\phi $ for the following list of $n$ and $d$. 
\begin{itemize}
    \item $n=2$, $3 \leq d \leq 25$;
    \item $n=3$, $4\leq d \leq 17$; 
    \item $n=4$, $6 \leq d \leq 12$;
    \item $n=5$, $7 \leq d \leq 9$.
\end{itemize}
\end{theorem}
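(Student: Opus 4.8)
The plan is to encode the rational curves in question by their \emph{syzygy matrices}, turn the assignment $\phi\mapsto\phi^{*}\!\circ d\phi$ into an explicit polynomial map between affine spaces, and then prove that this map is dominant by checking that its differential at one well-chosen point is surjective; that last check is the computer-assisted step, carried out separately for each pair $(n,d)$ in the list. For the setup, let $\phi=[F_{0}:\cdots:F_{n}]\colon\mathbb P^{1}\to\mathbb P^{n}$ be a nondegenerate morphism of degree $d$ whose restricted tangent bundle is balanced, i.e.\ isomorphic to $E:=\mathcal O_{\mathbb P^{1}}(d+q)^{\oplus a}\oplus\mathcal O_{\mathbb P^{1}}(d+q+1)^{\oplus b}$. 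Pulling back the Euler sequence gives $0\to\mathcal O_{\mathbb P^{1}}\xrightarrow{\,F\,}\mathcal O_{\mathbb P^{1}}(d)^{\,n+1}\xrightarrow{\,\pi\,}\phi^{*}T_{\mathbb P^{n}}\to 0$, and a choice of isomorphism $\iota\colon\phi^{*}T_{\mathbb P^{n}}\xrightarrow{\ \sim\ }E$ turns $M:=\iota\circ\pi$ into an $n\times(n+1)$ matrix of binary forms with $a$ rows of degree $q$ and $b$ rows of degree $q+1$. The relation $MF=0$ recovers each $F_{j}$, up to a common scalar, as the signed maximal minor of $M$ obtained by deleting the $j$th column. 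Conversely, a general matrix $M$ of this shape has kernel a line subbundle isomorphic to $\mathcal O_{\mathbb P^{1}}$, generated by the tuple of signed maximal minors, which have no common zero; so $M$ defines a nondegenerate immersed rational curve $\phi_{M}$ of degree $d$ with $\phi_{M}^{*}T_{\mathbb P^{n}}\cong E$, together with a distinguished isomorphism $\iota_{M}$. Thus a dense open subset $U$ of the affine space $\mathbb A^{N}$ of such matrices, with $N=(n+1)(d+n)$, parametrizes pairs consisting of a degree-$d$ rational curve with balanced restricted tangent bundle and a trivialization of that bundle.

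Next I would make the differential explicit. Using the natural action of a vector field $v\in H^{0}(T_{\mathbb P^{1}})=H^{0}(\mathcal O_{\mathbb P^{1}}(2))$ on binary forms (that is, $v$ acting as a derivation), the composition $\phi^{*}\!\circ d\phi$ for $\phi=\phi_{M}$ is the sheaf map $v\mapsto M\cdot(v\cdot F)$, where $v\cdot F=(v\cdot F_{0},\dots,v\cdot F_{n})$; by the Leibniz rule applied to $MF=0$ this also equals $-(v\cdot M)\cdot F$, with $v$ acting entrywise on $M$. Either way this is a polynomial map $\Psi\colon U\longrightarrow \operatorname{Hom}(\mathcal O_{\mathbb P^{1}}(2),E)\cong\mathbb A^{\,(n+1)d-n}$ in the entries of $M$ — this is ``the differential for the rational curve'' read off the syzygy matrix. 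By construction the image of $\Psi$ is precisely the set of morphisms of the stated shape that occur as $\phi^{*}\!\circ d\phi$ for a degree-$d$ rational curve in $\mathbb P^{n}$ — and since $d\ge n+1$ this is the only relevant case — so the theorem for a fixed $(n,d)$ is exactly the statement that $\Psi$ is dominant; note the numerics are consistent, $N=(n+1)(d+n)\ge(n+1)d-n$. To prove dominance it suffices to exhibit one point $M_{0}\in U$ at which the Jacobian $d\Psi_{M_{0}}\colon K^{N}\to K^{\,(n+1)d-n}$ is surjective, since then $\Psi$ is smooth at $M_{0}$ and hence dominant. Accordingly, for each $(n,d)$ in the list I would choose a convenient sparse matrix $M_{0}$ — nonzero entries taken to be distinct monomials or small rational numbers, arranged so that the maximal minors have gcd $1$ and are linearly independent, which guarantees $M_{0}\in U$ — compute the Jacobian of $\Psi$ at $M_{0}$ in exact arithmetic, and verify that its rank equals $(n+1)d-n$.

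The crux, and the only genuine obstacle, is this rank computation: there is no formal reason that $d\Psi$ should be generically of maximal rank. Conjecture~\ref{conj}(2) predicts the opposite when $d=n+1$ and $n\ge 4$, where the image of $\Psi$ is a proper subvariety and $d\Psi$ drops rank everywhere, so a soft argument such as a bare dimension count cannot decide the question — the computation must actually be run. The practical limitation is scale: both $N$ and the size of the Jacobian grow quickly in $n$ and $d$, which forces the test matrix $M_{0}$ to be chosen sparse enough to keep the symbolic computation feasible while still lying in $U$ and still witnessing full rank. Extending the theorem past the ranges of $(n,d)$ listed is then a matter of computing power rather than of a new idea.
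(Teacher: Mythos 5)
Your proposal is correct and follows essentially the same route as the paper: parametrize curves by their syzygy matrices (recovering the parametrization as the signed maximal minors), realize $\phi^{*}\circ d\phi$ as a polynomial map $\Psi$ from the space of such matrices to $\operatorname{Hom}(\mathcal{O}_{\mathbb{P}^1}(2),\mathcal{O}_{\mathbb{P}^1}(d+q)^{\oplus a}\oplus\mathcal{O}_{\mathbb{P}^1}(d+q+1)^{\oplus b})$ via the Euler sequence, and certify dominance for each $(n,d)$ by checking that the Jacobian of $\Psi$ has full rank at a single point. The only (cosmetic) difference is that you evaluate at a hand-chosen sparse point in exact arithmetic, whereas the paper evaluates the symbolic Jacobian at a random numerical matrix.
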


\begin{theorem}\label{thm2}
  If $4 \leq n \leq 8$, then there exists a morphism 
$$f : \mathcal{O}_{\mathbb{P}^1}(2) \to \mathcal{O}_{\mathbb{P}^1}(n+2)^{\oplus (n-1)} \oplus \mathcal{O}_{\mathbb{P}^1}(n+3)$$
such that there is no rational curve 
$C \xrightarrow{\phi} \mathbb{P}^n$
of degree $n+1$ for which $f = d \phi \circ \phi^*$.
\end{theorem}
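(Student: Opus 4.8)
The plan is to convert ``$f$ is realized by a degree‑$(n+1)$ curve'' into an explicit algebraic system through the syzygy‑matrix description of the differential, to cut that system down to a small amount of linear algebra, and then to exhibit for each $n\in\{4,\dots,8\}$ a morphism $f$ for which the system is inconsistent. Throughout set $d=n+1$, so that $E:=\mathcal O_{\mathbb P^1}(n+2)^{\oplus(n-1)}\oplus\mathcal O_{\mathbb P^1}(n+3)$ is the balanced bundle in the statement.

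First I would write the equations. If a curve $\phi=[F_0:\cdots:F_n]$ of degree $n+1$ realizes a morphism with target $E$, then $\phi^{*}T_{\mathbb P^n}\cong E$, and by pulling back the Euler sequence this isomorphism is the transpose of an $(n+1)\times n$ syzygy matrix $M$ of $(F_0,\dots,F_n)$ with $n-1$ columns of linear forms and one column of quadratic forms, i.e. $\sum_i M_{ik}F_i=0$. In these coordinates $\phi$ realizes $f=(f_1,\dots,f_n)$ exactly when $\sum_i M_{ik}\,\partial_t F_i = s\,f_k$ for $k=1,\dots,n$ (equivalently $\sum_i M_{ik}\,\partial_s F_i=-t\,f_k$). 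Hence ``$f$ is realized over $\overline K$'' is equivalent to solvability of the polynomial system, in the coefficients of $(F,M)$, consisting of the syzygy relations $\sum_i M_{ik}F_i=0$ together with $\sum_i M_{ik}\,\partial_t F_i = s\,f_k$: the open condition that $M$ be an honest isomorphism is automatic at any solution, and degenerate curves may be ignored, since their restricted tangent bundle contains an $\mathcal O(n+1)$ summand and so is never isomorphic to $E$.

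Second I would reduce this system using the classical fact that every non‑degenerate rational curve of degree $n+1$ in $\mathbb P^n$ is a projection $\pi_p\circ\nu$ of the rational normal curve $\nu\subset\mathbb P^{n+1}$ from a point $p=[p_0:\cdots:p_{n+1}]$. Writing $\nu^{*}T_{\mathbb P^{n+1}}=\mathcal O(n+2)^{\oplus(n+1)}$ via its bidiagonal syzygy matrix, the projection yields $0\to\mathcal O(n+1)\xrightarrow{\ell(p)}\mathcal O(n+2)^{\oplus(n+1)}\to\phi^{*}T_{\mathbb P^n}\to0$ with $\ell(p)=(tp_k-sp_{k+1})_{k=0}^{n}$, and $d\phi$ is the fixed ``monomial'' differential $d\nu=(s^n,s^{n-1}t,\dots,t^n)$ followed by the quotient. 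Comparing the first $n-1$ components shows the coefficient vectors of $f_1,\dots,f_{n-1}$ (degree‑$n$ binary forms) must span the space of constant syzygies of $\ell(p)$, which forces $p$, viewed as a linear functional on $K[s,t]_{n+1}$, to annihilate $sf_j$ and $tf_j$ for $j=1,\dots,n-1$, i.e. to annihilate the subspace $I_{n+1}\subset K[s,t]_{n+1}$ with $I=(f_1,\dots,f_{n-1})$. For $n\ge4$ one has $2(n-1)\ge n+2$, so if the $f_j$ are chosen so that $sf_j,tf_j$ already span $K[s,t]_{n+1}$ then the only such functional is $p=0$ and no curve realizes $f$; this essentially settles the matter for a general $f$, and the computation below makes it unconditional and explicit.

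Finally, for each $n\in\{4,\dots,8\}$ I would fix such an $f$ concretely — e.g. with $f_1,\dots,f_{n-1}$ monomials arranged so that $sf_j,tf_j$ visibly exhaust the degree‑$(n+1)$ monomials, and $f_n$ arbitrary — and certify by a computer‑algebra computation (Gröbner basis over $\mathbb Q$) that the ideal generated by the defining equations of the fibre over $f$, either in the form $\{\sum_i M_{ik}F_i=0,\ \sum_i M_{ik}\partial_tF_i=s f_k\}$ or, after the reduction, as the linear system for $p$, is the unit ideal; since $1$ lying in the ideal is stable under base change, the fibre is then empty over every field. I expect the main obstacle to be making this airtight for a prescribed $f$ rather than a merely ``general'' one: the computation must confirm that the chosen $f$ really is general enough for the relevant linear system to have maximal rank, and one has to be sure that no curve with unbalanced restricted tangent bundle — hence, by the hyperplane‑of‑forms parametrization, no curve at all — evades the argument; beyond that the Gröbner/elimination step grows rapidly with $n$, which is what confines the theorem to the range $4\le n\le8$.
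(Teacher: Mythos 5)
Your argument is correct, but it takes a genuinely different route from the paper. The paper stays inside the map $\Psi$ from syzygy matrices to $\mathrm{Hom}(\mathcal O_{\mathbb P^1}(2),\mathcal O_{\mathbb P^1}(n+2)^{\oplus(n-1)}\oplus\mathcal O_{\mathbb P^1}(n+3))$ built in Section 3 and certifies non-dominance by a Mathematica rank computation on the $F$-block of $FH$; the obstruction is detected numerically but not explained. You instead use the classical fact that every non-degenerate degree-$(n+1)$ map $\mathbb P^1\to\mathbb P^n$ is the standard rational normal curve $\nu\subset\mathbb P^{n+1}$ followed by projection from a point $p$, so that $\phi^*T_{\mathbb P^n}=\mathrm{coker}\bigl(\ell(p):\mathcal O(n+1)\to\nu^*T_{\mathbb P^{n+1}}=\mathcal O(n+2)^{\oplus(n+1)}\bigr)$ with $\ell(p)=(tp_k-sp_{k+1})_k$, and $d\phi$ is the fixed monomial differential $(s^{n-k}t^k)_k$ followed by the quotient. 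Since the projection of the quotient onto the $\mathcal O(n+2)^{\oplus(n-1)}$ part is a constant matrix annihilating $\ell(p)$, the components $f_1,\dots,f_{n-1}$ are constrained exactly by requiring the functional $p$ on $K[s,t]_{n+1}$ to kill $sf_j$ and $tf_j$ for all $j$; as $2(n-1)\ge n+2$ precisely when $n\ge 4$, choosing the $f_j$ so that these products span $K[s,t]_{n+1}$ (e.g.\ $s^4,s^2t^2,t^4$ for $n=4$) forces $p=0$, a contradiction, and degenerate curves are excluded because their restricted tangent bundle has an $\mathcal O(n+1)$ quotient. I checked the key identifications ($\ell(p)$, the monomial differential, and the translation of constant syzygies into the conditions $\langle p,sf_j\rangle=\langle p,tf_j\rangle=0$) and they are right; note also that for $d=n+1$ every non-degenerate curve automatically has splitting type $\mathcal O(n+2)^{\oplus(n-1)}\oplus\mathcal O(n+3)$ (all summands of a quotient of $\mathcal O(n+2)^{\oplus(n+1)}$ have degree $\ge n+2$, and the total degree leaves no other option), so your residual worry about unbalanced non-degenerate curves is vacuous, and the final Gr\"obner step is superfluous since the spanning check is a bare list of monomials. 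Your approach therefore buys strictly more than the paper's: it is computer-free, characteristic-free, and proves the non-realizability statement for every $n\ge 4$ rather than only $4\le n\le 8$; the paper's approach, by contrast, is uniform with its proof of Theorem 1.2 and requires no structure theory of low-degree curves, but as written its rank-one assertion about the matrix with rows $(-tF_i,\,sF_i)$ is trivially true for any $F_i$ and does not by itself explain why $\Psi$ fails to dominate.
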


\textbf{Organization of the paper.}  In Section \ref{pre}, we recall the preliminary facts needed in the rest of the paper. In Section \ref{rel} we  explicitly write down the relation between $f$ and syzygy matrix of the rational curve. In Section \ref{proof}, we show a computer assisted proof of Theorem \ref{thm1} and Theorem \ref{thm2}. 

\textbf{Acknowledgements.} I extend my deepest gratitude to my advisor, Izzet Coskun, for providing invaluable guidance and insightful suggestions throughout the course of my study at UIC. I would like to thank Ning Jiang for her constructive remarks on the proof of Theorem 1.3. I would also thank Yeqin Liu for many useful discussions. 

\section{Preliminaries} \label{pre}
In this sections, we collect necessary facts on rational curves and vector bundles on $\mathbb{P}^n$. \par
A \emph{rational curve} is a curve which is birationally equivalent to $\mathbb{P}^1$. A rational curve $C \subset \mathbb{P}^n$ can be described as a map $[s:t] \mapsto [G_0(s,t),...,G_n(s,t)]$ where $G_i$ are  homogeneous polynomials in variable $s,t$ of the same degree $d$ with no common zeros. Here $d$ is called the degree of the rational curve $C$. \par
Given a homogeneous ideal $I=(f_1,...,f_m) \subset K[x_0,...,x_n]$. The \emph{first syzygy module} $Syz(f_1,...,f_m)$ consists of $m$-tuples $(g_1,...g_m)$ of elements of $K[x_0,...,x_n]$ such that $f_1g_1+,,,+f_mg_m=0$. The syzygy module can be presented as the kernel of a map \( K[x_0,...,x_n]^m \to I \). To describe the syzygy module, we often use a \emph{syzygy matrix}. If we have relations
$ \sum_{j=1}^m l_{ij} f_j = 0 $
for \( i = 1, 2, \ldots, t \), then these relations can be organized into a matrix \( L \) such that
\[ L \cdot \begin{pmatrix} f_1 \\ f_2 \\ \vdots \\ f_m \end{pmatrix} = 0. \]
Here, \( L \) is a \( t \times m \) matrix with entries in \( K[x_0,...,x_n] \). \par
By the Birkhoff–Grothendieck theorem, every vector bundle $V$ of rank $r$ on $\mathbb{P}^1$ decomposes as a direct sum of line bundles 
\[
\mathcal{E} \cong \bigoplus_{i=1}^r \mathcal{O}_{\mathbb{P}^1}(a_i)
\]
for a unique sequence of integers $a_1\leq ...\leq a_r$. 
The sequence of integers $a_1,...,a_r$ is called the \emph{splitting type} of $V$. We say that the vector bundle is \emph{$j$-balanced} if $a_1-a_r\leq j$. If $j=0$, we say that $V$ is \emph{perfectly} balanced. If $j=1$, we say that $V$ is balanced. \par
In \cite{Ram90}, Ramella proved that the restriction of the tangent bundle of $\mathbb{P}^n$ to a general rational curve is balanced. Let $C$ be a general rational curve in $\mathbb{P}^n$ of degree $d\geq n+1$. Then $T_{\mathbb{P}^n}|_C$ has degree $(n+1)d$.  We have a balanced restriction $$T_{\mathbb{P}^n}|_C= \mathcal{O}_{\mathbb{P}^1}(d+q)^{\oplus a} \bigoplus  \mathcal{O}_{\mathbb{P}^1}(d+q+1)^{\oplus b},$$ where $q=\lfloor \frac{d}{n} \rfloor$, $a=(q+1)n-d$ and $b=d-nq$.

\section{Relation Between the Morphism and the Rational Curve} \label{rel}
In this part, we use the Euler sequence of projective space to set up a commutative diagram. This commutative diagram relates the syzygy matrix of the rational curve $C$ and the morphism $f$ in Conjecture 1.1 and Conjecture 1.2. We work over a field $K$ of general characteristic.  \par
 Let $n \geq 2$ be an integer. Consider the Euler sequence of projective space $\mathbb{P}^n$. 
\[
0 \to \mathcal{O}_{\mathbb{P}^n} \to \mathcal{O}_{\mathbb{P}^n}(1)^{\oplus (n+1)} \to T_{\mathbb{P}^n} \to 0
.\]
 Let $C \subset \mathbb{P}^n$ be a rational curve of degree $d$. Restricting the Euler sequence to $C$, we get
 \[
0 \to \mathcal{O}_{\mathbb{P}^1} \to \mathcal{O}_{\mathbb{P}^1}(d)^{\oplus (n+1)} \to T_{\mathbb{P}^n}|_C \to 0.
\]
Suppose $C$ is a general curve, by Ramella's result, we have 
\[
0 \to \mathcal{O}_{\mathbb{P}^1} \to \mathcal{O}_{\mathbb{P}^1}(d)^{\oplus (n+1)} \to \mathcal{O}_{\mathbb{P}^1}(d+q)^{\oplus a} \bigoplus  \mathcal{O}_{\mathbb{P}^1}(d+q+1)^{\oplus b} \to 0
\]
where $q=\lfloor \frac{d}{n} \rfloor$, $a=(q+1)n-d$, $b=d-nq$.
Combining this sequence with the Euler sequence of $\mathbb{P}^1$, we have the following commutative diagram. 
\[
\begin{CD}
0 @>>> \mathcal{O}_{\mathbb{P}^1} @>>> \mathcal{O}_{\mathbb{P}^1}(1)^{\oplus 2} @>{
\begin{pmatrix}
    -t& s
\end{pmatrix}
}>> \mathcal{O}_{\mathbb{P}^1}(2) @>>> 0 \\
    @. @. @V{J}VV @V{FH}VV @. \\
0 @>>> \mathcal{O}_{\mathbb{P}^1} @>{G}>> \mathcal{O}_{\mathbb{P}^1}(d)^{\oplus (n+1)}  @>{LP}>> \mathcal{O}_{\mathbb{P}^1}(d+q)^{\oplus a} \bigoplus  \mathcal{O}_{\mathbb{P}^1}(d+q+1)^{\oplus b} @>{}>> 0 \\
\end{CD}
\]
\par
Let $s$, $t$ be the coordinate variables of $\mathbb{P}^1$. Morphisms in this commutative diagram can be represented by matrices of homogeneous polynomials with variables $s$, $t$.\par
$G=(G_0,...,G_n)$ is defined by the embedding of the rational curve $C \subset \mathbb{P}^n$ where $$G_i=\sum_{j=0}^{d}g_{ij}s^jt^{d-j}$$ and $J$ is the Jacobian matrix of $G$. \\
$LP=\begin{pmatrix}
    L\\
    P
\end{pmatrix}$
is an $n\times (n+1)$ matrix representing the syzygy of the ideal generated by $G_1,...,G_n$. $L=\begin{pmatrix}
L_{10} & L_{01} & \cdots & L_{0n} \\
L_{20} & L_{11} & \cdots & L_{1n} \\
\vdots & \vdots & \ddots & \vdots \\
L_{a0} & L_{a1} & \cdots & L_{an}
\end{pmatrix}$ where $L_{ki}=\sum_{j=0}^ql_{kij}s^jt^{q-j}$ for $k=1,...,a$ and $i=0,...,n$. \\
$P=\begin{pmatrix}
P_{10} & P_{01} & \cdots & P_{0n} \\
P_{20} & P_{11} & \cdots & P_{1n} \\
\vdots & \vdots & \ddots & \vdots \\
P_{b0} & P_{b1} & \cdots & P_{bn}
\end{pmatrix}$ where $P_{ki}=\sum_{j=0}^{q+1}p_{kij}s^jt^{q+1-j}$ for $k=1,...,b$ and $i=0,...,n$. \\
$FH=(F|H)^{\intercal}$ is the matrix representing the morphism $f$ in Conjecture 1.1 and Conjecture 1.2. \\
$F=(F_1,...,F_a)$ where $F_i=\sum_{j=0}^{d+q-2}f_{ij}s^jt^{d+q-2-j}$. 
$H=(H_1,...,H_b)$ where $H_i=\sum_{j=0}^{d+q-1}h_{ij}s^jt^{d+q-1-j}$. 
Since this is a commutative diagram, we get the equation of matrices 
\begin{equation}
    LP.J=FH.
\begin{pmatrix}
    -t& s
\end{pmatrix}.\label{equ}
\end{equation}
Both sides of the equation (\ref{equ}) are $n\times 2$ matrices of homogeneous polynomials. If we can solve this system of equations, we will have a  polynomial relation between the curve $C$ and the morphism $f$.  \par
One natural idea is that we can represent the syzygy matrix $LP$ and the Jacobian matrix $J$ by $g_{ij}$ and solve (\ref{equ}) for $f_{ij}$ and $h_{ij}$ by $g_{ij}$. However, this computation is extremely difficult since it involves solving high degree polynomial equations with a huge number of variables.  \par
Instead, we choose to fix the syzygy matrix $LP$ and construct a rational curve whose ideal has this syzygy matrix. Formally speaking, we take $G_0,...,G_n$ to be $n\times n$-minors of $LP$. Then we compute the Jacobian $J$ and compare the coefficients of the polynomial in both sides of equation (\ref{equ}). It becomes a system of equations with variables $f_{ij}$, $h_{ij}$, $l_{kij}$ and $p_{kij}$.\par
The right hand side of \ref{equ} is $$FH.
\begin{pmatrix}
    -t& s
\end{pmatrix}=\begin{pmatrix}
-tF_1 & sF_1 \\
\vdots & \vdots \\
-tF_a & sF_a \\
-tH_1 & sH_1 \\
\vdots & \vdots \\
-tH_b & sH_b \\
\end{pmatrix}$$
Therefore, we can linearly solve for $f_{ij}$ and $h_{ij}$ by $l_{kij}$ and $p_{kij}$.
The number of variables $f_{ij}$ and $h_{ij}$ is $a(d+q-1)+b(d+q)$. The number of variables $l_{kij}$ and $p_{kij}$ is $((q+1)a+(q+2)b)(n+1)$. The difference is $n^2+2n$. This shows we have enough variables $l_{kij}$ and $p_{kij}$ to represent $f_{ij}$ and $h_{ij}$. \par
To conclude, we have the following theorems. 

\begin{theorem}\label{tthm}
Suppose $char(K) \gg 0$. Let $d\geq n+1$ be integers. $q=\lfloor \frac{d}{n} \rfloor$, $a=(q+1)n-d$, $b=d-nq$.  The previous argument gives a polynomial ring map $$\Psi: K^{((q+1)a+(q+2)b)(n+1)} \rightarrow K^{a(d+q-1)+b(d+q)}$$    from the space of syzygy matrix of rational curves of degree $d$ in $\mathbb{P}^n$ to $$\text{Hom}(\mathcal{O}_{\mathbb{P}^1}(2) , \mathcal{O}_{\mathbb{P}^1}(d+q)^a \oplus \mathcal{O}_{\mathbb{P}^1}(d+q+1)^b).$$
   Given a general morphism $$f:\mathcal{O}_{\mathbb{P}^1}(2) \rightarrow \mathcal{O}_{\mathbb{P}^1}(d+q)^a \bigoplus  \mathcal{O}_{\mathbb{P}^1}(d+q+1)^b,$$ there is a rational curve $C \xrightarrow{\phi} \mathbb{P}^n$ of degree $d$ such that $f=\phi^* \circ d\phi $ if and only if $\Psi$ is dominant. 
\end{theorem}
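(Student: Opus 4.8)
The plan is to unwind the construction from Section~\ref{rel} and check that the resulting map $\Psi$ faithfully encodes the question being asked. First I would fix the source space: a point of $K^{((q+1)a+(q+2)b)(n+1)}$ is the collection of coefficients $l_{kij}, p_{kij}$ of a candidate syzygy matrix $LP$, from which we recover a rational curve by declaring $G_0,\dots,G_n$ to be the (signed) maximal minors of the $n\times(n+1)$ matrix $LP$. One then forms the Jacobian $J$ of $(G_0,\dots,G_n)$, and the entries of $LP\cdot J$ are homogeneous of the right bidegree; comparing them column by column against the right-hand side $FH\cdot\begin{pmatrix}-t & s\end{pmatrix}$ of \eqref{equ}, the displayed shape $\left(-tF_i,\ sF_i\right)$ and $\left(-tH_i,\ sH_i\right)$ shows that each coefficient $f_{ij}$, $h_{ij}$ is read off \emph{linearly} (indeed as a single coefficient, up to sign) from the polynomial entries of $LP\cdot J$, which are themselves polynomial in the $l$'s and $p$'s. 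Thus $\Psi$ is a well-defined morphism of affine spaces, polynomial in the source coordinates; this is the content of the first sentence and requires only bookkeeping of degrees — the dimension count $n^2+2n$ already carried out shows the target dimension is no larger than the source, so there is no formal obstruction to dominance.

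The substance of the theorem is the ``if and only if''. For the forward direction, suppose $\Psi$ is dominant; then its image contains a dense open $U$, so a general $f$ lies in $U=\Psi(\text{something})$, i.e.\ $f$ arises from some tuple $(l,p)$, hence from the curve $C$ whose defining polynomials are the minors of the corresponding $LP$. The point to verify is that the commutative diagram of Section~\ref{rel} genuinely forces $FH$ to represent $\phi^*\circ d\phi$: this is because the diagram is built from the Euler sequences of $\mathbb{P}^1$ and $\mathbb{P}^n$, the middle vertical $J$ is the restricted Euler map composed with $d\phi$, and the rightmost vertical is the induced map on quotients, which is exactly $\phi^*\circ d\phi$ once we identify $T_{\mathbb{P}^n}|_C$ with the balanced bundle on the right. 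Conversely, if a general $f$ is of the form $\phi^*\circ d\phi$ for some degree-$d$ rational curve $C$, then running the Euler-sequence diagram for that $C$ produces a syzygy matrix $LP$ for the ideal of $C$ with $\Psi(LP)=f$; hence the image of $\Psi$ contains a dense subset of the target, so $\Psi$ is dominant. The two implications together are the claim.

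The step I expect to be delicate is the bijective dictionary between ``rational curves $C$ of degree $d$'' and ``points of the source affine space'', which the statement phrases as ``the space of syzygy matrix of rational curves of degree $d$''. A general point of $K^{((q+1)a+(q+2)b)(n+1)}$ is an arbitrary matrix of the prescribed bidegrees, and one must argue that for general such data the maximal minors $G_0,\dots,G_n$ are nonzero, of the correct degree $d$, without common factor, so that they do define an honest degree-$d$ rational curve, and moreover that $LP$ is then (up to the usual $\mathrm{GL}$-action) the syzygy matrix of that curve rather than an unrelated matrix — this is where one uses the Hilbert--Burch--type structure of the resolution of the ideal of a rational curve in $\mathbb{P}^n$, guaranteeing that a general arithmetically Cohen--Macaulay codimension-two ideal has a syzygy matrix of exactly these Betti numbers. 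I would handle this by noting it suffices to exhibit \emph{one} syzygy matrix arising from an actual general rational curve (Ramella's theorem supplies the balanced splitting, which pins down the bidegrees), so that the locus of ``good'' $(l,p)$ is nonempty hence dense; on that locus the constructions of $C$ from $LP$ and of $LP$ from $C$ are mutually inverse up to change of basis, which is all that dominance of $\Psi$ requires. The remaining points — exactness of the two columns of the diagram, the $\mathrm{char}(K)\gg 0$ hypothesis ensuring the generic behaviour (in particular that $d\phi$ has full rank at the generic point and that no unexpected characteristic-$p$ degeneration of the minors occurs) — are routine once this dictionary is in place.
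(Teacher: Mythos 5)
Your proposal is correct and follows essentially the same route as the paper, which presents Theorem \ref{tthm} as a summary of the Section \ref{rel} construction (minors of $LP$ define $C$, equation \eqref{equ} determines $FH$ linearly, and dominance is equivalent to a general $f$ being realized). You in fact go further than the paper by explicitly flagging and addressing the Hilbert--Burch dictionary between general syzygy matrices and actual degree-$d$ rational curves, which the paper leaves implicit.
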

We will analyze the surjectivity of $\Psi$ in next section. 
In the following example, we use a given syzygy matrix to compute the map $FH$. 
\begin{example}
   Let $n=4$ and $d=5$. Given syzygy matrix $$LP=\begin{pmatrix}
-t & s & 0 & 0 & 0 & 0 \\
0 & -t & s-2t & 4s & 0 & 0 \\
0 & 0 & 0 & 0 & -t & s \\
-t^2 & -2t^2 & s^2 & 0 & 0 & 0 \\
\end{pmatrix}$$ 
We take $G_0,...,G_4$ to be $4\times 4$-minors of $LP$, we have 
$$G=\begin{pmatrix}
4s^5 & 4s^4t & 4s^2t^2(s + 2t) & 4st^4 & 4t^5
\end{pmatrix}.$$ 
The Jacobian matrix of $G$ is 
\[
J = \begin{pmatrix}
20s^4 & 0 \\
16s^3t & 4s^4 \\
4st^2(3s + 4t) & 8s^2t(s + 3t) \\
4t^4 & 16st^3 \\
0 & 20t^4
\end{pmatrix}
\]
Then we have 
$$LP.J=\begin{pmatrix}
-4s^4t & 4s^5 \\
-4st^2\left(s^2 + 2st + 4t^2\right) & 4s^2t\left(s^2 + 2st + 4t^2\right) \\
-4t^5 & 4st^4 \\
-8s^3t^2(s + 2t) & 8s^4t(s + 2t)
\end{pmatrix}$$

and 

\[
FH.
\begin{pmatrix}
    -t& s
\end{pmatrix}=\scalebox{0.75}{$
\begin{pmatrix}
-t^5 f_{10} - s t^4 f_{11} - s^2 t^3 f_{12} - s^3 t^2 f_{13} - s^4 t f_{14} & s t^4 f_{10} + s^2 t^3 f_{11} + s^3 t^2 f_{12} + s^4 t f_{13} + s^5 f_{14} \\
-t^5 f_{20} - s t^4 f_{21} - s^2 t^3 f_{22} - s^3 t^2 f_{23} - s^4 t f_{24} & s t^4 f_{20} + s^2 t^3 f_{21} + s^3 t^2 f_{22} + s^4 t f_{23} + s^5 f_{24} \\
-t^5 f_{30} - s t^4 f_{31} - s^2 t^3 f_{32} - s^3 t^2 f_{33} - s^4 t f_{34} & s t^4 f_{30} + s^2 t^3 f_{31} + s^3 t^2 f_{32} + s^4 t f_{33} + s^5 f_{34} \\
-t^6 h_{10} - s t^5 h_{11} - s^2 t^4 h_{12} - s^3 t^3 h_{13} - s^4 t^2 h_{14} - s^5 t h_{15} & s t^5 h_{10} + s^2 t^4 h_{11} + s^3 t^3 h_{12} + s^4 t^2 h_{13} + s^5 t h_{14} + s^6 h_{15}
\end{pmatrix}
$}
\]
If we set these two matrix equal, then we can solve for $FH$ as 
\par
$$FH=\begin{pmatrix}
4s^4 & 4s^3t + 8s^2t^2 + 16st^3 & 4t^4 & 8s^4t + 16s^3t^2
\end{pmatrix}.$$

\end{example}

In general, for each given pair of $n,d$, we can use Mathematica to symbolically compute $f_{ij}$ and $h_{ij}$. Here is the result for $n=2$ $d=4$. 

\begin{example}
    Let $n=2$ $d=4$. Suppose $char(K) \neq 2$. Solving Equation  (\ref{equ}) by Mathematica, we get the explicit expression of $\Psi$ represented by the elements of $FH$. 
    \begin{itemize}
    \item $f_{10} = (-l_{101} l_{120} + l_{100} l_{121}) l_{210} + l_{111} (l_{120} l_{200} - l_{100} l_{220}) + l_{110} (-l_{121} l_{200} + l_{101} l_{220})$
    \item $f_{11} = -l_{111} l_{120} l_{201} - 2 l_{102} l_{120} l_{210} + 2 l_{100} l_{121} l_{210} + 2 l_{100} l_{120} l_{211} + l_{112} (l_{120} l_{200} - l_{100} l_{220}) - l_{100} l_{111} l_{221} + l_{110} (-2 l_{122} l_{200} + l_{101} l_{220} + l_{100} l_{221})$
    \item $f_{12} = -2 l_{110} l_{122} l_{201} - l_{110} l_{121} l_{202} - 2 l_{100} l_{122} l_{211} - l_{101} l_{122} l_{211} + l_{112} (l_{121} l_{200} + 2 l_{100} l_{201} - l_{101} l_{220} - l_{100} l_{221}) + l_{101} l_{110} l_{222}$
    \item $f_{13} = -2 l_{110} l_{122} l_{202} - l_{102} l_{121} l_{211} - l_{101} l_{122} l_{211} + l_{112} (l_{121} l_{200} + 2 l_{100} l_{201} - l_{101} l_{220} - l_{100} l_{221}) + l_{101} l_{110} l_{222}$
    \item $f_{14} = -l_{112} (l_{120} l_{200} - l_{100} l_{220}) l_{121} l_{210} + l_{112} (l_{120} l_{220})$
    \item $f_{20} = l_{120} (-l_{201} l_{210} + l_{200} l_{211}) + l_{110} (l_{201} l_{220} - l_{200} l_{221}) + l_{100} (l_{211} l_{220} - l_{210} l_{221})$
    \item $f_{21} = l_{121} (-l_{201} l_{210} + l_{200} l_{211}) + l_{110} (-2 l_{202} l_{210} + 2 l_{200} l_{212}) + l_{101} (l_{211} l_{220} - l_{210} l_{221}) + 2 l_{100} l_{210} l_{222}$
    \item $f_{22} = -l_{120} l_{202} l_{211} + l_{122} (l_{201} l_{210} - l_{200} l_{211}) + l_{112} (-l_{201} l_{220} + 2 l_{200} l_{221}) + l_{111} (l_{201} l_{220} - l_{200} l_{221}) + l_{110} (2 l_{202} l_{210} - 2 l_{200} l_{212}) + l_{101} l_{120} l_{222} - 2 l_{100} l_{122} l_{221}$
    \item $f_{23} = l_{122} (-2 l_{202} l_{210} + 2 l_{200} l_{212}) + l_{121} (-l_{202} l_{211} + 2 l_{200} l_{212}) + l_{112} (-2 l_{201} l_{220} + 2 l_{200} l_{221}) - l_{111} (2 l_{202} l_{210} - 2 l_{200} l_{212}) + l_{101} l_{120} l_{222} - 2 l_{111} l_{210} l_{221} + 2 l_{110} l_{210} l_{222} + l_{100} l_{211} l_{222}$
    \item $f_{24} = l_{122} (-l_{202} l_{211} + l_{201} l_{212}) + l_{112} (l_{202} l_{221} - l_{201} l_{222}) + l_{111} (l_{201} l_{220} - l_{200} l_{221}) + l_{110} l_{202} l_{221} - l_{101} l_{212} l_{221} + l_{100} l_{211} l_{222}$
\end{itemize}
Notice that in this example $b=0$. Therefore, we do not have $H$ terms. If we substitute $f_{ij}$ by the result above, we will get the expression of $FH$. However, due to the complexity of the expression and the limited space available, we do not display it here.
\end{example}

\section{Proof of Main Theorems} \label{proof}
Now, we use Mathematica computation to prove Theorem \ref{thm1}. We restate it for convenience.
\begin{theorem}[Restatement of Theorem \ref{thm1}]
 Let $d\geq n+1$ be integers. $char(K)=0$.  $q=\lfloor \frac{d}{n} \rfloor$, $a=(q+1)n-d$, $b=d-nq$. 
Given a general morphism $$f:\mathcal{O}_{\mathbb{P}^1}(2) \rightarrow \mathcal{O}_{\mathbb{P}^1}(d+q)^{\oplus a} \bigoplus  \mathcal{O}_{\mathbb{P}^1}(d+q+1)^{\oplus b},$$ there is a rational curve $C \xrightarrow{\phi} \mathbb{P}^n$ of degree $d$ such that $f=\phi^* \circ d\phi $ for the following list of $n$ and $d$. 
\begin{itemize}
    \item $n=2$, $3 \leq d \leq 25$;
    \item $n=3$, $4\leq d \leq 17$; 
    \item $n=4$, $6 \leq d \leq 12$;
    \item $n=5$, $7 \leq d \leq 9$.
\end{itemize}
\end{theorem}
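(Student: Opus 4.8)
The plan is to reduce the statement, via Theorem \ref{tthm}, to checking that the explicit polynomial map
\[
\Psi: K^{((q+1)a+(q+2)b)(n+1)} \longrightarrow K^{a(d+q-1)+b(d+q)}
\]
is dominant for each listed pair $(n,d)$, and then to verify dominance by a single symbolic Jacobian computation per pair. Concretely, I would first implement in Mathematica the construction of Section \ref{rel}: take a symbolic syzygy matrix $LP$ with indeterminate coefficients $l_{kij}, p_{kij}$, form the $n$ vectors $G_0,\dots,G_n$ as the signed maximal minors of $LP$, compute the Jacobian $J$ of $G$ with respect to $s,t$, and extract $FH$ by solving the linear system \eqref{equ} for the $f_{ij}, h_{ij}$ in terms of the $l_{kij}, p_{kij}$ (the paper already guarantees this linear solve is consistent and that $\Psi$ is a well-defined polynomial map). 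This yields $\Psi$ as an explicit tuple of polynomials.

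The key step is then dominance. Since the source and target are affine spaces over a field of characteristic $0$, $\Psi$ is dominant if and only if its differential $d\Psi$ has rank equal to $\dim(\text{target}) = a(d+q-1)+b(d+q)$ at some point of the source; equivalently, the Jacobian matrix of $\Psi$ has full row rank at a generic point. To make this a finite, rigorous check I would evaluate the symbolic Jacobian of $\Psi$ at a randomly chosen rational (or integer) point $\mathbf{l}_0$ of the source, obtaining an explicit integer matrix, and compute its rank. If that rank equals the target dimension at the sampled point, then by semicontinuity of rank it is generically full, so $\Psi$ is dominant; by Theorem \ref{tthm} this proves the theorem for that $(n,d)$. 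One should additionally check that the chosen point $\mathbf{l}_0$ corresponds to an honest rational curve — i.e. that the minors $G_i$ have no common factor (so the degree is exactly $d$ and not lower) — which can be arranged by picking $\mathbf{l}_0$ generically and verifying $\gcd(G_0,\dots,G_n)=1$ and that $C$ is nondegenerate; a single good sample suffices. Running this for all pairs in the four families listed completes the proof.

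The main obstacle is computational feasibility rather than conceptual difficulty: the size of the source grows like $((q+1)a+(q+2)b)(n+1) \approx (n+1)(d+2)$ variables and the minors $G_i$ are degree-$d$ polynomials whose coefficients are degree-$n$ polynomials in the $l_{kij}$, so the symbolic $FH$ and its Jacobian become large for the bigger cases ($n=2, d=25$; $n=4, d=12$; $n=5, d=9$). I would mitigate this by never expanding $\Psi$ fully symbolically where avoidable: instead, for the rank check, specialize the $l_{kij}$ to numerical values first and differentiate numerically (or use automatic/forward-mode differentiation on the straight-line program computing $FH$), reducing everything to exact integer linear algebra, which Mathematica handles at these sizes. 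A secondary subtlety is the hypothesis $char(K)\gg 0$ in Theorem \ref{tthm} versus the $char(K)=0$ claim here: in characteristic $0$ all the denominators appearing when solving the linear system \eqref{equ} are nonzero, so the construction and the identity $f = \phi^*\circ d\phi$ hold verbatim, and I would note this explicitly. Finally, to keep the proof self-contained and checkable, I would record, for each $(n,d)$, the chosen sample point, the resulting Jacobian rank, and the target dimension, so a reader can rerun the verification.
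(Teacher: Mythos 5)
Your proposal is correct and follows essentially the same route as the paper: reduce via Theorem \ref{tthm} to dominance of $\Psi$, compute $FH$ symbolically from the syzygy matrix, and verify full rank of the Jacobian at a random numerical point for each listed $(n,d)$. The extra care you take (verifying the sample point gives an honest degree-$d$ nondegenerate curve, and avoiding full symbolic expansion for the larger cases) is a sensible refinement of, not a departure from, the paper's argument.
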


\begin{proof}
    By Theorem \ref{tthm}, we need to prove $\Psi$ is dominant. Since dominance is an open condition, it suffices to show the Jacobian matrix of $\Psi$ is of full rank at a random point. \par
    As showed in Section 3, $\Psi$ is represented by the matrix $FH$. We can use Mathematica to symbolically compute $FH$ by the syzygy matrix $LP$ of the rational curve. Taking the Jacobian $J'$ of $FH$, we get the matrix representation of the differential $d\Phi$. Now it suffices to show $J'$ has full rank at a point. We use the random number generator in Mathematica to create a random numerical matrix $LP$ and plug 
 $l_{kij}$ and $p_{kij}$ in the Jacobian $J'$. Finally, we use the built-in rank-checking function in Mathematica to compute the rank of $J'$. The result shows that $J'$ has full rank. Thus, $\Psi$ is surjective at an Zariski open set of $K^{((q+1)a+(q+2)b)(n+1)}$. Therefore, $\Psi$ is dominant. 
\end{proof}

To further explain this proof, we show the computation in detail for $n=2$, $d=3$. 

\begin{example}
    Let $n=2$. $d=3$. In this case, the ideal of $C$ is generated by 
    \begin{itemize}
    \item $G_0=-l_{121} p_{112} s^3 + l_{111} p_{122} s^3 - l_{121} p_{111} s^2 t + l_{120} p_{112} s^2 t + l_{111} p_{121} s^2 t + l_{110} p_{122} s^2 t - l_{121} p_{110} s t^2 - l_{120} p_{111} s t^2 + l_{111} p_{120} s t^2 + l_{110} p_{121} s t^2 - l_{120} p_{110} t^3 + l_{110} p_{120} t^3$
    \item $G_1=l_{121} p_{102} s^3 - l_{101} p_{122} s^3 + l_{121} p_{101} s^2 t - l_{111} p_{121} s^2 t - l_{101} p_{112} s^2 t + l_{120} p_{102} s^2 t - l_{110} p_{121} s^2 t - l_{100} p_{122} s^2 t + l_{120} p_{101} s t^2 + l_{111} p_{100} s t^2 - l_{101} p_{120} s t^2 + l_{100} p_{121} s t^2 + l_{120} p_{100} t^3 - l_{100} p_{120} t^3$
    \item $G_2=-l_{111} p_{102} s^3 + l_{101} p_{112} s^3 - l_{110} p_{102} s^2 t + l_{111} p_{101} s^2 t - l_{101} p_{111} s^2 t + l_{110} p_{112} s^2 t + l_{101} p_{110} s t^2 - l_{100} p_{111} s t^2 - l_{110} p_{100} t^3 + l_{100} p_{110} t^3$
\end{itemize}

    Solving Equation  (\ref{equ}) by Mathematica, we get the explicit expression of $\Psi$ represented by the elements of $FH$. \begin{itemize}
    \item $f_{1,0} = l_{111} l_{120} p_{100} - l_{110} l_{121} p_{100} - l_{100} l_{120} p_{110} + l_{100} l_{121} p_{110} + l_{101} l_{110} p_{120} - l_{100} l_{111} p_{120}$
    \item $f_{1,1} = l_{111} l_{120} p_{101} - l_{110} l_{121} p_{101} - l_{100} l_{121} p_{111} + l_{101} l_{110} p_{121} - l_{100} l_{111} p_{121}$
    \item $f_{1,2} = l_{111} l_{120} p_{102} - l_{110} l_{121} p_{102} - l_{100} l_{121} p_{112} + l_{101} l_{110} p_{122} - l_{100} l_{111} p_{122}$
    \item $h_{1,0} = -l_{120} p_{101} p_{110} + l_{120} p_{100} p_{111} + l_{100} p_{111} p_{120} - l_{100} p_{110} p_{121} + l_{100} p_{101} p_{122}$
    \item $h_{1,1} = -l_{121} p_{101} p_{110} - 2 l_{120} p_{102} p_{110} + 2 l_{121} p_{102} p_{111} + l_{111} p_{101} p_{112} + l_{100} p_{112} p_{120} - l_{111} p_{100} p_{122}$
    \item $h_{1,2} = -2 l_{121} p_{102} p_{110} - l_{120} p_{102} p_{111} + 2 l_{111} p_{102} p_{120} + l_{110} p_{102} p_{121} - l_{100} p_{112} p_{121} - 2 l_{111} p_{100} p_{122}$
    \item $h_{1,3} = -l_{121} p_{102} p_{112} + l_{111} p_{101} p_{122} - l_{101} p_{112} p_{121} - l_{111} p_{101} p_{122} + l_{101} p_{111} p_{122}$
\end{itemize}

Now we take the Jacobian matrix $J'$ of $FH$. Due to the limited width of the paper, we display the transpose \\
\resizebox{\textwidth}{!}
{$
\left(
\begin{array}{ccccc}
 l_{121} p_{110} - l_{111} p_{120} & l_{121} p_{111} - l_{111} p_{121} & l_{121} p_{112} - l_{111} p_{122} & p_{110} p_{121} - p_{111} p_{120} & \cdots \\
 l_{101} p_{120} - l_{121} p_{100} & l_{101} p_{121} - l_{121} p_{101} & l_{101} p_{122} - l_{121} p_{102} & p_{101} p_{120} - p_{100} p_{121} & \cdots \\
 l_{111} p_{100} - l_{101} p_{110} & l_{111} p_{101} - l_{101} p_{111} & l_{111} p_{102} - l_{101} p_{112} & p_{100} p_{111} - p_{101} p_{110} & \cdots \\
 l_{110} p_{120} - l_{120} p_{110} & l_{110} p_{121} - l_{120} p_{111} & l_{110} p_{122} - l_{120} p_{112} & 0 & \cdots \\
 l_{120} p_{100} - l_{100} p_{120} & l_{120} p_{101} - l_{100} p_{121} & l_{120} p_{102} - l_{100} p_{122} & 0 & \cdots \\
 l_{100} p_{110} - l_{110} p_{100} & l_{100} p_{111} - l_{110} p_{101} & l_{100} p_{112} - l_{110} p_{102} & 0 & \cdots \\
 l_{111} l_{120} - l_{110} l_{121} & 0 & 0 & l_{120} p_{111} - l_{110} p_{121} & \cdots \\
 l_{100} l_{121} - l_{101} l_{120} & 0 & 0 & l_{100} p_{121} - l_{120} p_{101} & \cdots \\
 l_{101} l_{110} - l_{100} l_{111} & 0 & 0 & l_{110} p_{101} - l_{100} p_{111} & \cdots \\
 0 & l_{111} l_{120} - l_{110} l_{121} & 0 & l_{110} p_{120} - l_{120} p_{110} & \cdots \\
 0 & l_{100} l_{121} - l_{101} l_{120} & 0 & l_{120} p_{100} - l_{100} p_{120} & \cdots \\
 0 & l_{101} l_{110} - l_{100} l_{111} & 0 & l_{100} p_{110} - l_{110} p_{100} & \cdots \\
 0 & 0 & l_{111} l_{120} - l_{110} l_{121} & 0 & \cdots \\
 0 & 0 & l_{100} l_{121} - l_{101} l_{120} & 0 & \cdots \\
 0 & 0 & l_{101} l_{110} - l_{100} l_{111} & 0 & \cdots \\
\end{array}
\right.
$}

\resizebox{\textwidth}{!}{
$
\left.
\begin{array}{cccc}
 \cdots & 2 p_{110} p_{122} - 2 p_{112} p_{120} & p_{111} p_{122} - p_{112} p_{121} & 0 \\
 \cdots & 2 p_{102} p_{120} - 2 p_{100} p_{122} & p_{102} p_{121} - p_{101} p_{122} & 0 \\
 \cdots & 2 p_{100} p_{112} - 2 p_{102} p_{110} & p_{101} p_{112} - p_{102} p_{111} & 0 \\
 \cdots & p_{110} p_{121} - p_{111} p_{120} & 2 p_{110} p_{122} - 2 p_{112} p_{120} & p_{111} p_{122} - p_{112} p_{121} \\
 \cdots & p_{101} p_{120} - p_{100} p_{121} & 2 p_{102} p_{120} - 2 p_{100} p_{122} & p_{102} p_{121} - p_{101} p_{122} \\
 \cdots & p_{100} p_{111} - p_{101} p_{110} & 2 p_{100} p_{112} - 2 p_{102} p_{110} & p_{101} p_{112} - p_{102} p_{111} \\
 \cdots & -2 l_{110} p_{122} - l_{111} p_{121} + 2 l_{120} p_{112} + l_{121} p_{111} & 2 l_{121} p_{112} - 2 l_{111} p_{122} & 0 \\
 \cdots & 2 l_{100} p_{122} + l_{101} p_{121} - 2 l_{120} p_{102} - l_{121} p_{101} & 2 l_{101} p_{122} - 2 l_{121} p_{102} & 0 \\
 \cdots & -2 l_{100} p_{112} - l_{101} p_{111} + 2 l_{110} p_{102} + l_{111} p_{101} & 2 l_{111} p_{102} - 2 l_{101} p_{112} & 0 \\
 \cdots & l_{111} p_{120} - l_{121} p_{110} & l_{120} p_{112} - l_{110} p_{122} & l_{121} p_{112} - l_{111} p_{122} \\
 \cdots & l_{121} p_{100} - l_{101} p_{120} & l_{100} p_{122} - l_{120} p_{102} & l_{101} p_{122} - l_{121} p_{102} \\
 \cdots & l_{101} p_{110} - l_{111} p_{100} & l_{110} p_{102} - l_{100} p_{112} & l_{111} p_{102} - l_{101} p_{112} \\
 \cdots & 2 l_{110} p_{120} - 2 l_{120} p_{110} & l_{110} p_{121} + 2 l_{111} p_{120} - l_{120} p_{111} - 2 l_{121} p_{110} & l_{111} p_{121} - l_{121} p_{111} \\
 \cdots & 2 l_{120} p_{100} - 2 l_{100} p_{120} & -l_{100} p_{121} - 2 l_{101} p_{120} + l_{120} p_{101} + 2 l_{121} p_{100} & l_{121} p_{101} - l_{101} p_{121} \\
 \cdots & 2 l_{100} p_{110} - 2 l_{110} p_{100} & l_{100} p_{111} + 2 l_{101} p_{110} - l_{110} p_{101} - 2 l_{111} p_{100} & l_{101} p_{111} - l_{111} p_{101} \\
\end{array}
\right)
$}

We use the random number generator in Mathematica to create a random numerical matrix $LP$ and plug 
 $l_{kij}$ and $p_{kij}$ in the Jacobian $J'$. Finally, we use the built-in rank-checking function in Mathematica to compute the rank of this numerical matrix and we get $7$. Hence, this matrix has full rank. Therefore, $\Psi$ is dominant.

\end{example}

\begin{remark}
    In fact, our Mathematica program can compute the Jacobian matrix of $\Psi$ in symbolic form for each given $n$ and $d$ . However, due to the memory limitation of our personal computer, we cannot compute the rank of this symbolic Jacobian matrix. Instead, we  numerically compute its rank at some random point. Consequently, we limit our results in characteristic $0$. For those with access to a workstation or high-performance computing resources, this program should yield results for the general characteristic case.
\end{remark}

Now we use Mathematica computation to prove Theorem \ref{thm2}. We restate it for convenience.
\begin{theorem}[Restatement of Theorem \ref{thm2}]

  If $n=4,5,6,7,8$, then there exists a morphism 
$f : \mathcal{O}_{\mathbb{P}^1}(2) \to \mathcal{O}_{\mathbb{P}^1}(n+2)^{\oplus (n-1)} \oplus \mathcal{O}_{\mathbb{P}^1}(n+3)$
such that there is no rational curve 
$C \xrightarrow{\phi} \mathbb{P}^n$
of degree $n+1$ for which $f = d \phi \circ \phi^*$.

\end{theorem}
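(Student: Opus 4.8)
The plan is to deduce the statement from Theorem \ref{tthm}. For $d = n+1$ one has $q = 1$, $a = n-1$, $b = 1$, and the balanced bundle appearing there is exactly $\mathcal{O}_{\mathbb{P}^1}(n+2)^{\oplus(n-1)} \oplus \mathcal{O}_{\mathbb{P}^1}(n+3)$; so Theorem \ref{tthm} produces a polynomial map
$$\Psi : K^{(2n+1)(n+1)} \longrightarrow K^{\,n^2+n+1}$$
such that a general $f$ equals $d\phi\circ\phi^*$ for some degree-$(n+1)$ rational curve if and only if $\Psi$ is dominant. Thus it suffices to show, for each $n\in\{4,5,6,7,8\}$, that $\Psi$ is \emph{not} dominant; any $f$ outside the proper closed subset $\overline{\Psi(K^{(2n+1)(n+1)})}$ then witnesses the theorem.

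Following Section \ref{rel} and the proof of Theorem \ref{thm1}, I would first have Mathematica compute $\Psi$ in closed form: set $G_0,\dots,G_n$ equal to the maximal minors of the syzygy matrix $LP$, form its Jacobian $J$, and solve the linear system \eqref{equ} coefficientwise for the coefficients $f_{ij},h_{ij}$ of the entries of $FH$ as polynomials (homogeneous of degree $n+1$) in the syzygy coefficients $l_{kij},p_{kij}$. Let $J'$ be the Jacobian of $\Psi$ — equivalently, of the tuple of $f_{ij},h_{ij}$ — with respect to the $l$'s and $p$'s. Because the source of $\Psi$ has strictly larger dimension than the target, $\Psi$ is dominant if and only if $J'$ has generic rank $n^2+n+1$, so the task is to certify that its generic rank is at most $n^2+n$.

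The crucial point — and the reason this cannot simply copy the proof of Theorem \ref{thm1} — is that one now needs an \emph{upper} bound on the generic rank of $J'$: evaluating $J'$ at a single random rational point only bounds the rank from below and hence can never establish non-dominance, and by the Remark after Theorem \ref{thm1} a direct symbolic rank computation for $J'$ is beyond our hardware once $n$ is large. Instead the plan is to produce an explicit certificate of rank deficiency, namely a nonzero polynomial $R$ in $n^2+n+1$ variables with $R(f_{ij},h_{ij})\equiv 0$ after the substitution above. One searches for $R$ of a fixed small degree by evaluating $\Psi$ at many random points of $K^{(2n+1)(n+1)}$, collecting the resulting sample points of the image, and solving the ensuing linear system for the coefficients of $R$; one then confirms the identity $R\circ\Psi\equiv 0$, which is far cheaper than computing a symbolic rank and can itself be verified rigorously by evaluation at sufficiently many points (a degree bound plus Schwartz--Zippel). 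Existence of such an $R$ forces $\overline{\Psi(K^{(2n+1)(n+1)})}\subseteq V(R)\subsetneq K^{n^2+n+1}$, so $\Psi$ is not dominant, and Theorem \ref{tthm} concludes.

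I expect the main obstacle to be this last step for the larger values of $n$: guessing the correct degree of the obstruction polynomial $R$, and carrying out and certifying the identity $R\circ\Psi\equiv 0$ when $n=8$, where $J'$ has size $73\times 153$ and entries of degree $8$ in $153$ variables. Two mitigations are worth trying first: reduce the number of free parameters by normalizing the reparametrization action of $\mathrm{Aut}(\mathbb{P}^1)$ and the left action of $\mathrm{Aut}\big(\mathcal{O}_{\mathbb{P}^1}(n+2)^{\oplus(n-1)}\oplus\mathcal{O}_{\mathbb{P}^1}(n+3)\big)$ on $LP$ and $FH$ before searching for $R$; and, for $n=4,5$, attempt the symbolic rank of $J'$ directly, which would give the cleanest conclusion. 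One should not expect a purely conceptual shortcut through the cokernel of $f$: for these $(n,d)$ the cokernel of a general $f$ has the same splitting type as the normal bundle of a general degree-$(n+1)$ rational curve in $\mathbb{P}^n$, so the obstruction is a finer one about how $\mathcal{O}_{\mathbb{P}^1}(2)$ sits inside the bundle, and is most naturally detected by the syzygy-matrix computation above.
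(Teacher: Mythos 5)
Your reduction is the same as the paper's: by Theorem \ref{tthm} (with $d=n+1$, $q=1$, $a=n-1$, $b=1$, and your dimension counts $(2n+1)(n+1)$ and $n^2+n+1$ are right), the theorem is equivalent to non-dominance of $\Psi$, and you correctly identify the key logical point that a random-point evaluation of the Jacobian can only bound the generic rank from below, so a genuinely different certificate is needed than in Theorem \ref{thm1}. Where you diverge is in how that certificate is produced. You propose a blind search: sample the image, interpolate an unknown polynomial $R$ of guessed degree vanishing on it, then verify $R\circ\Psi\equiv 0$. The paper instead computes $FH$ symbolically and directly observes a structural degeneracy in the $\mathcal{O}_{\mathbb{P}^1}(n+2)^{\oplus(n-1)}$ block: for $n=4,\dots,8$ the components $F_1,\dots,F_{n-1}$ produced by any syzygy matrix are linearly dependent (the paper phrases this as the matrix of the $-tF_i,\,sF_i$ having rank $1$, i.e.\ the $F_i$ span a line in the space of forms of degree $n+1$). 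Since the $F_i$ of a general $f$ are linearly independent ($n-1\le n+1$ coefficients each), the image lies in the proper closed locus where the coefficient matrix of $(F_1,\dots,F_{n-1})$ drops rank, and $\Psi$ is not dominant. So the obstruction you are searching for does exist, and in the smallest possible degree: it is given by the $2\times 2$ minors of that coefficient matrix, i.e.\ explicit quadrics in the $f_{ij}$ (not involving the $h_{ij}$ at all). Your interpolation scheme would find these at degree $2$, but at the cost of the degree-guessing and certification issues you yourself flag; the paper's targeted computation is much cheaper and yields a cleaner, human-readable description of the image closure. If you pursue your route, I would recommend restricting the ansatz for $R$ to the $f_{ij}$ variables and to quadrics from the start, which essentially collapses your method into the paper's.
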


\begin{proof}
    By Theorem \ref{tthm}, it suffices to prove $\Psi$ is not dominant. Notice that the argument in Theorem \ref{tthm} works  for a field $K$ of general characteristic. However, since non-surjectivity is a closed condition, if we can check $\Psi$ is not surjective for a general characteristic, then it is not surjective for all characteristic.  \par
    We use the built-in rank function in Mathematica to compute the rank of the matrix 
    \[
M=
\begin{pmatrix}
-tF_1 & sF_1 \\
\vdots & \vdots \\
-tF_{n-1} & sF_{n-1} \\
\end{pmatrix}.
\]
We find that for $n=4,5,6,7,8$, rank$(M)$=1. Hence there is linear relation between the rows of $M$. This shows there is first order relations in $FH$. Therefore, $\Psi$ is not surjective. 
\end{proof}

\bibliographystyle{plain}
\bibliography{tangent}
\nocite{*}

\end{document}